\documentclass[sumlimits,intlimits,reqno,12pt]{amsart}
\usepackage{amsmath, amsfonts, amssymb,amsthm}
\textheight 20cm
\theoremstyle{plain}
\usepackage[all]{xy}
\newtheorem{theorem}{Theorem}[section]
\newtheorem{lemma}[theorem]{Lemma}
\newtheorem{corollary}[theorem]{Corollary}
\newtheorem{proposition}[theorem]{Proposition}

\theoremstyle{definition}
\newtheorem{definition}[theorem]{Definition}

\theoremstyle{remark}

\numberwithin{equation}{section}

\def\glc{generalized local cohomology }

\def\fg{finitely generated }
\def\inf{\mathop{\mathrm{inf}}}
\def\Ext{\mathrm{Ext}}
\def\pd{\mathop{\mathrm{pd}}}
\def\Tor{\mathrm{Tor}}
\def\Supp{\mathop{\mathrm{Supp}}}

\def\Hom{\mathrm{Hom}}
\def\hom{\mathrm{Hom}}
\def\Ass{\mathrm{Ass}}\def\Supp{\mathrm{Supp}}
\def\P{\mathfrak{p}}\def\p{\mathfrak{p}}

\def\ker{\mathop{\mathrm{Ker}}}
\def\im{\mathop{\mathrm{Im}}}
\def\tot{\mathrm{Tot}}
\begin{document}

\title{SOME PROPERTIES OF GENERALIZED LOCAL COHOMOLOGY MODULES WITH RESPECT TO A PAIR OF IDEALS}


\author{TRAN TUAN NAM}
\address{Department of Mathematics-Informatics, Ho Chi Minh University of  Pedagogy, Ho Chi Minh city, Viet Nam.}
\curraddr{}
\email{namtuantran@gmail.com}
\thanks{This research is funded by Vietnam National Foundation for
Science and Technology Development (NAFOSTED)}

\author{NGUYEN MINH TRI}
\address{Department of Natural Science Education, Dong Nai University, Dong Nai, Viet Nam.}
\curraddr{}
\email{triminhng@gmail.com}
\thanks{}



\date{}

\dedicatory{}

\begin{abstract} We introduce a notion of generalized local cohomology
 modules with respect to a pair of ideals $(I,J)$ which is a generalization of the concept of local cohomology modules  with respect to $(I,J).$
 We  show that generalized local cohomology modules $H^i_{I,J}(M,N)$ can be
computed by the \v{C}ech cohomology modules. We also  study the
artinianness of generalized local cohomology modules
$H^i_{I,J}(M,N).$
\end{abstract}

\maketitle
\noindent {\it Key words}:   generalized local cohomology,
artinianness.

\noindent {\it 2000 Mathematics subject classification}: 13D45.

 \markboth{TRAN TUAN NAM, NGUYEN MINH TRI}{Some properties  of generalized local cohomology modules...}
\bigskip
 \section{Introduction}
 \medskip

Throughout this paper, $R$ is a noetherian commutative (non-zero
identity) ring. In \cite{takloc}, Takahashi, Yoshino and Yoshizawa
introduced the local cohomology modules with respect to a pair of
ideals $(I,J)$. For an $R$-module $M$,
 the $(I,J)$-torsion submodule of $M$ is $\Gamma_{I,J}(M)=\{x\in M | I^nx\subset Jx \text{ for some positive integer } n\}$. $\Gamma_{I,J}$
 is a covariant  functor from the category of $R$-modules to itself.
  The $i$-th  local cohomology functor $H^i_{I,J}$ with respect to $(I,J)$ is defined to be the $i$-th right derived functor of $\Gamma_{I,J}$.
   When $J=0$, the $H^i_{I,J}$ coincides with the usual local cohomology functor $H^i_I.$

For two $R-$modules $M$ and $N,$ we  define  $\Gamma_{I,J}(M,N)$ to
be the $(I,J)$-torsion submodule of $\Hom_R(M,N).$ For each
$R-$module $M,$ there is a covariant functor $\Gamma_{I,J}(M,-)$
from the category of $R$-modules to itself. The $i$-th generalized
local cohomology functor $H^i_{I,J}(M,-)$ with the respect to pair
of ideals $(I,J)$ is the $i$-th right derived functor of
$\Gamma_{I,J}(M,-).$ This definition is really a generalization of
the local cohomology functors $H^i_{I,J}$ with respect to $(I,J).$

The organization of the paper is as follows. In the next section, we
study  some elementary properties of \glc modules with respect to a
pair of ideals $(I,J).$ We also show that generalized local
cohomology modules $H^i_{I,J}(M,N)$ can be computed by \v{C}ech
cohomology modules (Theorem \ref{cechcomplex}).

The last section is devoted to study  the artinianness of local
cohomology modules $H^i_{I,J}(M,N).$
   In Theorem \ref{Artinian1} we prove that if $M,\ N$ are two \fg $R$-modules with $p=\pd(M)$ and $d=\dim(N),$ then
$H^{r+d}_{I,J}(M,N)\cong \Ext^r_R(M,H^d_{I,J}(N))$ and
$H^{r+d}_{I,J}(M,N)$ is an artinian $R-$module. Theorem
\ref{Artinian2} shows that if $M$ is a \fg  $R$-modules and
$H^i_{I,J}(N)$ is artinian for all $i<t,$ then
 $H^i_{I,J}(M,N)$  is artinian for all $i<t.$ On the other hand,   $\Ext^{i}_R(R/\mathfrak{a}, N)$ is also artinian for all $i<t$ and for all $\mathfrak{a}\in \tilde{W}(I,J).$
Let $I,J$ be two ideals of the local ring $(R,\mathfrak{m})$ such
that $\sqrt{I+J}=\mathfrak{m}$ and  $M, N$ are two \fg $R$-modules
with $\dim(N)<\infty.$ If $H^i_{I,J}(M,N)$ is an artinian $R$-module
for all $i>t,$ then $H^t_{I,J}(M,N)/JH^t_{I,J}(M,N)$ is also an
artinian $R$-module (Theorem \ref{artinian3}). This section is
closed by Theorem \ref{artiniannisartinian} which says that
$H^i_{I,J}(M,N)$ is artinian for all $i\geq 0$ provided $M$ is a
finitely generated $R$-module and $N$ is an artinian $R$-module.
\bigskip

\section{Some basic properties of  \glc modules with respect to a pair of ideals}
\medskip
Let $I,\ J$ be two ideals of $R.$  For an $R$-module $M$,
 the $(I,J)$-torsion submodule of $M$ is $$\Gamma_{I,J}(M)=\{x\in M | I^nx\subset Jx \text{ for some positive integer } n\}
 \text{(\cite{takloc})}.$$
  We introduce the following
definition.

\begin{definition}
For two $R-$modules $M,\ N$ we denote by $\Gamma_{I,J}(M,N)$ the
following module
$$\Gamma_{I,J}(M,N)=\Gamma_{I,J}(\Hom_R(M,N)).$$
\end{definition}

In the special case $M=R,$\ $\Gamma_{I,J}(R,N)=\Gamma_{I,J}(N)$ the
$(I,J)$-torsion submodule of $N.$ Note that an element $f\in
\Gamma_{I,J}(M,N)$ if and only if there is an integer $n>0$ such
that $I^nf(x)\subset Jf(x)$
 for all $x\in M$.

For each  $R$-module $M$,  $\Gamma_{I,J}(M,-)$ is a  left exact
covariant functor from the category of $R$-modules to itself.

Let us denote by $H^i_{I,J}(M,-)$ the $i$-th right derived functor
of $\Gamma_{I,J}(M,-)$ and call the $i$-th generalized local
cohomology functor with the respect to pair of ideals $(I,J)$.

\begin{theorem}\label{defgijhom} Let $M$ be a finitely generated
$R$-module and $N$ an $R-$module. Then
$$\Gamma_{I,J}(M,N)= \Hom_R(M,\Gamma_{I,J}(N)).$$
\end{theorem}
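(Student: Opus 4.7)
The proof will establish equality of two submodules of $\Hom_R(M,N)$. The right-hand side embeds into $\Hom_R(M,N)$ via the left-exactness of $\Hom_R(M,-)$ applied to $\Gamma_{I,J}(N)\hookrightarrow N$, while the left-hand side is a submodule by definition, so the plan is to argue by double inclusion.

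The inclusion $\Gamma_{I,J}(\Hom_R(M,N))\subseteq \Hom_R(M,\Gamma_{I,J}(N))$ does not require any hypothesis on $M$. If $f\in \Hom_R(M,N)$ satisfies $I^n f\subseteq Jf$, then for every $a\in I^n$ one can write $af=jf$ for some $j\in J$; evaluating at $x\in M$ gives $af(x)=jf(x)\in Jf(x)$. So $I^n f(x)\subseteq Jf(x)$ for every $x$, meaning $f(x)\in \Gamma_{I,J}(N)$, so $f$ factors through $\Gamma_{I,J}(N)$.

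For the reverse inclusion, finite generation of $M$ enters. Write $M=Rx_1+\cdots+Rx_k$ and take $f\in \Hom_R(M,\Gamma_{I,J}(N))$. For each $i$ choose $n_i$ with $I^{n_i}f(x_i)\subseteq Jf(x_i)$, and set $n=\max_i n_i$. Given $a\in I^n$, the relation $af(x_i)=j_i f(x_i)$ for some $j_i\in J$ rewrites as $a-j_i\in \Ann_R(f(x_i))$, yielding the ideal inclusion $I^n\subseteq J+\Ann_R(f(x_i))$ for each $i$. Iterating the elementary identity $(J+\mathfrak{a})(J+\mathfrak{b})\subseteq J+\mathfrak{a}\mathfrak{b}$ gives
\[
I^{nk}=(I^n)^k\subseteq \prod_{i=1}^{k}\bigl(J+\Ann_R(f(x_i))\bigr)\subseteq J+\prod_{i=1}^{k}\Ann_R(f(x_i))\subseteq J+\bigcap_{i=1}^{k}\Ann_R(f(x_i)),
\]
and the last intersection equals $\Ann_R(f(M))=\Ann_R(f)$ since $f(M)=\sum_i Rf(x_i)$. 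Hence for $a\in I^{nk}$ I can write $a=j+b$ with $j\in J$ and $bf=0$, so $af=jf\in Jf$. This proves $I^{nk}f\subseteq Jf$ and therefore $f\in \Gamma_{I,J}(\Hom_R(M,N))$.

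The main obstacle is exactly this last step: moving from a per-generator condition, in which the scalar $j_i$ is allowed to depend on $x_i$, to the global condition $I^N f\subseteq Jf$, in which a single $j\in J$ must serve for every $x\in M$. Finite generation of $M$ reduces the problem to a finite intersection of ideals, and the product-of-ideals trick then absorbs the discrepancy at the expense of replacing $n$ by $nk$.
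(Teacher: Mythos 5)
Your proof is correct and, in fact, more careful than the paper's. For the reverse inclusion the paper picks $n$ with $I^nf(x_i)\subseteq Jf(x_i)$ for each generator $x_i$, asserts that this gives $I^nf(x)\subseteq Jf(x)$ for every $x\in M$, and then concludes $I^nf\subseteq Jf$; neither step is justified as written, since the scalar $j\in J$ in $af(x_i)=jf(x_i)$ may vary with $i$, whereas $I^nf\subseteq Jf$ demands, for each $a\in I^n$, a single $j$ that works for all $x$ simultaneously. In ideal-theoretic terms the implicit claim is that $I^n\subseteq J+\Ann_R(f(x_i))$ for every $i$ forces $I^n\subseteq J+\bigcap_i\Ann_R(f(x_i))=J+\Ann_R(f)$, and this fails in general: in $k[x,y]$ take $I=(x)$, $J=(y)$, $\mathfrak a_1=(x-y)$, $\mathfrak a_2=(x+y)$; then $I\subseteq J+\mathfrak a_i$ for both $i$, but $x\notin(y,x^2-y^2)=(y,x^2)$. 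Your passage to the annihilator ideals $\Ann_R(f(x_i))$, combined with the identity $(J+\mathfrak a)(J+\mathfrak b)\subseteq J+\mathfrak a\mathfrak b$, is exactly the repair this step needs: it yields $I^{nk}\subseteq J+\Ann_R(f)$, hence $I^{nk}f\subseteq Jf$, with the exponent raised from $n$ to $nk$, which is harmless since the definition of $\Gamma_{I,J}$ only asks for some exponent. So the overall shape of the argument (double inclusion, with finite generation carrying the hard direction) matches the paper's, but your handling of the crucial step is the rigorous one.
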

\begin{proof}
If $f\in \Gamma_{I,J}(M,N)$,  there exists an integer $n>0$ such
that $I^nf(x)\subset Jf(x)$ for all $x\in M$. Since $f(x)\in N$, we
get $f(x)\in \Gamma_{I,J}(N)$ for all $x\in M$ and then $f\in
\Hom_R(M,\Gamma_{I,J}(N)).$

Let $f\in \Hom_R(M,\Gamma_{I,J}(N))$. Assume that $x_1, x_2,\ldots, x_m$ are generators of $M$.

Since $f(x_i)\in \Gamma_{I,J}(N)$, there exist an integer $n_i$ such that $I^{n_i}f(x_i)\subset Jf(x_i)$ for $i=1,2,\ldots, m.$

Set $n=n_1n_2\ldots n_m$, then $I^nf(x_i)\subset Jf(x_i)$ for all $i=1,2,\ldots,m.$

It follows $I^nf(x)\subset Jf(x)$ for all $x\in M$. So $I^nf\subset Jf$ and then $f\in \Gamma_{I,J}(\Hom_R(M,N))=\Gamma_{I,J}(M,N).$
\end{proof}

Note that in \cite{zamgen} Zamani introduced an other definition of
local cohomology functors $H^i_{I,J}$ as follow
$$H^i_{I,J}(M,N)=H^i(\Hom_R(M,\Gamma_{I,J}(E^\bullet)))$$for all
$i\geq 0,$ where $E^\bullet$ is an injective resolution of
$R$-module $N.$ Thus from \ref{defgijhom}  we see that our
definition is coincident with Zamani's one.

  We have a property of the set of associated
primes  of $\Gamma_{I,J}(M,N)$.
\begin{corollary}\label{AssGIJ}
Let $M$ be a finitely generated $R$-module and $N$ an $R-$module.
Then
$$\Ass(\Gamma_{I,J}(M,N))=\Supp(M)\cap \Ass(N)\cap W(I,J).$$
\end{corollary}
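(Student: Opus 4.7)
The plan is to combine Theorem~\ref{defgijhom} with two standard formulas on associated primes, so the argument reduces to a one-line calculation. First, since $M$ is finitely generated, Theorem~\ref{defgijhom} identifies
$$\Gamma_{I,J}(M,N) = \Hom_R(M, \Gamma_{I,J}(N)),$$
which reduces the problem to computing the associated primes of a Hom module.

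Next, I would invoke the classical identity
$$\Ass_R\bigl(\Hom_R(M,L)\bigr) = \Supp(M) \cap \Ass(L)$$
valid for any finitely generated $R$-module $M$ and any $R$-module $L$ (a Bourbaki-type result; localising at $\mathfrak{p}$ one checks that $\mathfrak{p}$ is associated to $\Hom_R(M,L)$ iff $M_{\mathfrak{p}} \neq 0$ and $\mathfrak{p} R_{\mathfrak{p}} \in \Ass(L_{\mathfrak{p}})$). Applied with $L = \Gamma_{I,J}(N)$ this gives
$$\Ass\bigl(\Gamma_{I,J}(M,N)\bigr) = \Supp(M) \cap \Ass\bigl(\Gamma_{I,J}(N)\bigr).$$

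Finally, one quotes the formula from \cite{takloc} which says $\Ass(\Gamma_{I,J}(N)) = \Ass(N) \cap W(I,J)$; substituting into the previous display yields the asserted equality. There is no real obstacle here: the only thing to watch is that the Hom–Ass formula is applied in the correct generality (finitely generated first variable, arbitrary second variable), which is exactly our situation since $M$ is finitely generated while $\Gamma_{I,J}(N)$ need not be.
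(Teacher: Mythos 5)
Your plan matches the paper's proof exactly: both apply Theorem~\ref{defgijhom} to rewrite $\Gamma_{I,J}(M,N)$ as $\Hom_R(M,\Gamma_{I,J}(N))$, then invoke the standard identity $\Ass(\Hom_R(M,L))=\Supp(M)\cap\Ass(L)$ for finitely generated $M$, and finish with the formula $\Ass(\Gamma_{I,J}(N))=\Ass(N)\cap W(I,J)$ from \cite{takloc}. This is correct and is precisely the argument given in the paper.
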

\begin{proof}
Since $M$ is a finitely generated $R$-module,
$\Ass(\Hom_R(M,K))=\Supp(M)\cap \Ass(K)$ for all $R$-modules $K.$ By
\ref{defgijhom}, we have
\begin{align*}
\Ass(\Gamma_{I,J}(M,N))&=\Ass(\Gamma_{I,J}(\Hom_R(M,N)))\\
&=\Ass(\Hom_R(M,\Gamma_{I,J}(N)))\\
&=\Supp(M)\cap \Ass(\Gamma_{I,J}(N))\\
&=\Supp(M)\cap \Ass(N)\cap W(I,J)
\end{align*}
as required.
\end{proof}

The following proposition  is an extension of \cite[1.4]{takloc}.

\begin{proposition}\label{progij}
Let $M$ be a finitely generated $R$-module and $N$ an $R-$module.
Let $I, I^{'}, J, J^{'}$ be ideals of $R$. Then
\begin{enumerate}
\item[(i)] $\Gamma_{I,J}(\Gamma_{I^{'},J^{'}}(M,N))=\Gamma_{I^{'},J^{'}}(\Gamma_{I,J}(M,N)).$
\item[(ii)] If $I\subseteq I',$ then $\Gamma_{I,J}(M,N)\supseteq \Gamma_{I',J}(M,N).$
\item[(iii)] If $J\subseteq J',$ then $\Gamma_{I,J}(M,N)\subseteq \Gamma_{I,J'}(M,N).$
\item[(iv)] $\Gamma_{I,J}(\Gamma_{I^{'},J}(M,N))=\Gamma_{I+I^{'},J}(M,N).$
\item[(v)] $\Gamma_{I,J}(\Gamma_{I,J^{'}}(M,N))=\Gamma_{I,JJ'}(M,N)=\Gamma_{I,J\cap J^{'}}(M,N).$ Moreover, $H^i_{I,JJ'}(M,N)=H^i_{I,J\cap J'}(M,N)$
for all  $i.$
\item[(vi)] If $J^{'}\subseteq J$, then $\Gamma_{I+J^{'},J}(M,N)=\Gamma_{I,J}(M,N).$ Moreover,

$\Gamma_{I+J,J}(M,N)=\Gamma_{I,J}(M,N)$ and
$H^i_{I+J,J}(M,N)=H^i_{I,J}(M,N)$ for all   $i.$
\item[(vii)] If $\sqrt{I}=\sqrt{I^{'}}$, then $H^i_{I,J}(M,N)=H^i_{I^{'},J}(M,N)$ for all  $i.$ In particular, $H^i_{I,J}(M,N)=H^i_{\sqrt{I},J}(M,N)$ for all  $i.$
\item[(viii)] If $\sqrt{J}=\sqrt{J^{'}}$, then $H^i_{I,J}(M,N)=H^i_{I,J^{'}}(M,N)$ for all  $i$.
\end{enumerate}
\end{proposition}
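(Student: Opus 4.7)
The plan is to reduce every assertion to the analogous statement for the one-argument torsion functor $\Gamma_{I,J}(-)$ proved in \cite[1.4]{takloc}, using the tautology $\Gamma_{I,J}(M,N) = \Gamma_{I,J}(\Hom_R(M,N))$ that comes directly from the definition. Any identity or inclusion of $\Gamma$-modules that holds naturally in a module $K$ then transfers to the two-argument functor by setting $K := \Hom_R(M,N)$.

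For items (i)--(iv), together with the $\Gamma$-part of (v) and (vi), the argument is a direct substitution of $K = \Hom_R(M,N)$ into the corresponding item of \cite[1.4]{takloc}. In (i), for instance, the outer $\Gamma_{I,J}$ is read as the one-argument torsion functor applied to the $R$-module $\Gamma_{I',J'}(M,N) = \Gamma_{I',J'}(\Hom_R(M,N))$, so (i) is \cite[1.4]{takloc}(i) applied to $K$; (ii) and (iii) are the evident monotonicity of $(I,J)$-torsion in the two ideal arguments applied to the same $K$; and (iv) is likewise a direct transcription. For the $H^i$-equalities in (v)--(viii), my plan is to observe that the $\Gamma$-identities in \cite[1.4]{takloc} are natural in $K$, so they lift to equalities of the left exact functors $\Gamma_{?,?}(-)$ on the category of $R$-modules; precomposing with $\Hom_R(M,-)$ in the slot of $K$ then produces equalities of the functors $\Gamma_{?,?}(M,-)$, and the standard fact that naturally isomorphic functors have naturally isomorphic right-derived functors gives the corresponding equalities of $H^i_{?,?}(M,N)$. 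For (vii) and (viii), the underlying one-argument observation is the elementary fact that $\sqrt{I} = \sqrt{I'}$ forces $\Gamma_{I,J}(K) = \Gamma_{I',J}(K)$ for every $K$, because $I^n \subseteq I'$ and $(I')^m \subseteq I$ for some $n,m$, so each containment $I^k x \subseteq Jx$ yields a corresponding $(I')^{km} x \subseteq Jx$ and vice versa.

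The only delicate point, and hardly a serious obstacle, is bookkeeping: one must check that the identities from \cite[1.4]{takloc} are genuinely natural in $K$ (which they are, by inspection of their proofs) so that after composing with the functor $\Hom_R(M,-)$ we obtain an equality of functors on $R$-modules and not merely an isomorphism of modules for each fixed $N$. Theorem \ref{defgijhom} is not strictly required for the proof, although the alternative description $\Gamma_{I,J}(M,N) = \Hom_R(M, \Gamma_{I,J}(N))$ it provides can replace the tautology above when $M$ is finitely generated and tends to make the passage from $\Gamma$ to $H^i$ more transparent, since it exhibits $\Gamma_{I,J}(M,-)$ as the composition of two familiar functors.
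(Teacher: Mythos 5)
Your proposal is correct, and it is a genuinely cleaner reduction than the one the paper carries out. The paper proves (i) by invoking Theorem~\ref{defgijhom} twice in each direction: it rewrites $\Gamma_{I',J'}(M,N)$ as $\Hom_R(M,\Gamma_{I',J'}(N))$, moves the outer $\Gamma_{I,J}$ through the $\Hom$ to get $\Hom_R(M,\Gamma_{I,J}(\Gamma_{I',J'}(N)))$, applies \cite[1.4]{takloc}(i) to the inner pair, and then folds the $\Hom$ back up. You instead use the raw definition $\Gamma_{I,J}(M,N)=\Gamma_{I,J}(\Hom_R(M,N))$ and set $K=\Hom_R(M,N)$ once, so every $\Gamma$-identity in (i)--(vi) is literally \cite[1.4]{takloc} applied to the single module $K$, and the $H^i$-identities in (v)--(viii) follow because the relevant one-argument functors coincide on the nose (not merely up to isomorphism), hence so do their composites with $\Hom_R(M,-)$ and therefore their right-derived functors. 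Both arguments ultimately rest on \cite[1.4]{takloc}, but your route avoids Theorem~\ref{defgijhom} entirely, and in particular your proofs of the $\Gamma$-level identities in (i)--(vi) do not use the finite generation of $M$ at all; the paper's route does use it through Theorem~\ref{defgijhom}. That is a real, if modest, gain in generality, and it streamlines the bookkeeping considerably. One small caution worth keeping in mind if you write this out in full: when you say ``precomposing with $\Hom_R(M,-)$,'' the composite in question is $\Gamma_{?,?}\circ\Hom_R(M,-)$, i.e.\ $\Hom_R(M,-)$ is the inner functor; phrasing it that way avoids any ambiguity about which slot receives the injective resolution of $N$ when you pass to derived functors.
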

\begin{proof}
We only prove (i), the others are similar.

Combining \cite[1.4]{takloc} and \ref{defgijhom}, we have
\begin{align*}
\Gamma_{I,J}(\Gamma_{I^{'},J^{'}}(M,N))&=\Gamma_{I,J}(\Hom_R(M,\Gamma_{I'.J'}(N)))\\
&=\Hom_R(M,\Gamma_{I,J}(\Gamma_{I',J'}(N)))\\
&=\Hom_R(M,\Gamma_{I',J'}(\Gamma_{I,J}(N)))\\
&=\Gamma_{I^{'},J^{'}}(\Gamma_{I,J}(M,N))
\end{align*}
as required.
\end{proof}

\begin{lemma}\label{G_IJinject}
If $E$ is an injective $R$-module, then $\Gamma_{I,J}(E)$ is also
injective.
\end{lemma}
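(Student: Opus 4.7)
The plan is to reduce to the indecomposable case via Matlis's structure theorem for injectives over a noetherian ring and then compute $\Gamma_{I,J}$ on each $E(R/\mathfrak{p})$, as is essentially done in \cite{takloc}. Concretely, I would write $E \cong \bigoplus_{\mathfrak{p}} E(R/\mathfrak{p})^{(\mu_\mathfrak{p})}$, the sum running over $\mathfrak{p}\in\mathrm{Spec}(R)$.

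The key intermediate step is that $\Gamma_{I,J}$ commutes with arbitrary direct sums. I would establish this through the equivalent reformulation: $x \in \Gamma_{I,J}(M)$ iff $I^n \subseteq J + \Ann_R(x)$ for some $n \geq 1$, which is immediate from the definition since $ax = jx$ forces $a - j \in \Ann_R(x)$. For $x = x_{\lambda_1} + \cdots + x_{\lambda_k} \in \bigoplus_\lambda M_\lambda$ with finite support, one has $\Ann_R(x) = \bigcap_i \Ann_R(x_{\lambda_i})$. The easy direction (membership in the direct sum implies membership coordinate-wise) follows from $\Ann_R(x) \subseteq \Ann_R(x_{\lambda_i})$. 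The converse is a small expansion: given $I^{n_i} \subseteq J + \Ann_R(x_{\lambda_i})$, write each factor of a product $a_1 \cdots a_k \in I^{n_1+\cdots+n_k}$ as $a_i = j_i + b_i$ with $j_i \in J$ and $b_i \in \Ann_R(x_{\lambda_i})$; expanding $\prod_i(j_i + b_i)$, every summand lies in $J$ except $\prod_i b_i$, and this product annihilates every $x_{\lambda_j}$ because the factor $b_j$ already does.

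Next I would verify that $\Gamma_{I,J}(E(R/\mathfrak{p})) = E(R/\mathfrak{p})$ when $\mathfrak{p} \in W(I,J)$ and vanishes otherwise. For the nonzero direction, any $x \in E(R/\mathfrak{p})$ satisfies $\mathfrak{p}^m x = 0$ for some $m$, so if $I^n \subseteq J + \mathfrak{p}$ then $I^{nm} \subseteq (J + \mathfrak{p})^m \subseteq J + \mathfrak{p}^m \subseteq J + \Ann_R(x)$. For the vanishing, any nonzero $x \in E(R/\mathfrak{p})$ generates a submodule with unique associated prime $\mathfrak{p}$, so $\sqrt{\Ann_R(x)} = \mathfrak{p}$; if moreover $x \in \Gamma_{I,J}(E(R/\mathfrak{p}))$ then $I \subseteq \sqrt{J + \Ann_R(x)} = \sqrt{J + \mathfrak{p}}$, forcing $\mathfrak{p} \in W(I,J)$. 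Assembling, $\Gamma_{I,J}(E) \cong \bigoplus_{\mathfrak{p} \in W(I,J)} E(R/\mathfrak{p})^{(\mu_\mathfrak{p})}$, which is a direct summand of $E$ and also a direct sum of injective $R$-modules; since $R$ is noetherian, the Bass--Papp theorem gives injectivity (and the summand observation is an alternative route). The main obstacle is genuinely just the direct-sum commutation; the remaining pieces are routine.
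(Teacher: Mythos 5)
Your proof is correct, but it takes a genuinely different route from the paper's. The paper invokes \cite[3.2]{takloc} to write $\Gamma_{I,J}(E)$ as the filtered colimit $\varinjlim_{\mathfrak{a}\in\tilde W(I,J)}\Gamma_{\mathfrak{a}}(E)$; since each $\Gamma_{\mathfrak{a}}(E)$ is injective ($E$ being injective) and filtered colimits of injectives are injective over a noetherian ring, the conclusion is immediate. You instead pass through the Matlis decomposition $E\cong\bigoplus_{\mathfrak{p}}E(R/\mathfrak{p})^{(\mu_\mathfrak{p})}$, prove from scratch that $\Gamma_{I,J}$ commutes with arbitrary direct sums (via the reformulation $x\in\Gamma_{I,J}(M)\iff I^n\subseteq J+\Ann_R(x)$ for some $n$, and the binomial-style expansion for the converse inclusion), and compute $\Gamma_{I,J}$ on the indecomposables: it is the identity on $E(R/\mathfrak{p})$ for $\mathfrak{p}\in W(I,J)$ and zero otherwise, using $\sqrt{\Ann_R(x)}=\mathfrak{p}$ for $0\neq x\in E(R/\mathfrak{p})$. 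The two arguments lean on noetherianness in cognate but distinct ways (colimits of injectives are injective versus Matlis structure plus Bass--Papp or the direct-summand observation), and each would be a legitimate proof in the paper. What yours buys is self-containment and an explicit output: you obtain the identification $\Gamma_{I,J}(E)\cong\bigoplus_{\mathfrak{p}\in W(I,J)}E(R/\mathfrak{p})^{(\mu_\mathfrak{p})}$ rather than merely the injectivity conclusion; what the paper's buys is brevity, given that the colimit description is already available in \cite{takloc}. One small presentational point: in your direct-sum commutation argument the phrase ``a product $a_1\cdots a_k\in I^{n_1+\cdots+n_k}$'' is slightly compressed --- what you mean is that a generator of $I^{n_1+\cdots+n_k}$ is a product of $n_1+\cdots+n_k$ elements of $I$, grouped so that the $i$-th block $a_i$ lies in $I^{n_i}$; stated that way the expansion $\prod_i(j_i+b_i)$ lands in $J+\bigcap_i\Ann_R(x_{\lambda_i})$ exactly as you say.
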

\begin{proof}
From \cite[3.2]{takloc} we have
$$\Gamma_{I,J}(E)\cong  \mathop {\lim }\limits_{\begin{subarray}{c}
   \longrightarrow  \\
   \mathfrak{a}\in \tilde{W}(I,J)
\end{subarray}} \Gamma_\mathfrak{a}(E),$$ where $\tilde{W}(I,J)$ is the
set of ideals $\mathfrak{a}$ of $R$ such that $I^n\subset
\mathfrak{a}+J$ for some integer $n.$

 Since $E$ is an injective $R$-module, $\Gamma_\mathfrak{a}(E)$
is also injective. Moreover, $R$ is a Noetherian ring,
 then the direct limit of injective modules is injective. Therefore we have  the conclusion.\end{proof}

It is well-known that $ H^i_{I}(M,N)\cong\Ext^i_R(M,N),$ where $N$ is an $I$-torsion $R$-module.
The following proposition gives a similar result when $N$ is $(I,J)$-torsion.

\begin{proposition}\label{H_IJExt}
Let $N$ be an $(I,J)$-torsion $R$-module. Then $$H^i_{I,J}(M,N)\cong
\Ext^i_R(M,N)$$ for all $i\geq 0.$
\end{proposition}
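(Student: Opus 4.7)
My plan is to reduce the claim to a statement about $\Ext$ by producing an injective resolution of $N$ consisting entirely of $(I,J)$-torsion modules. If $0 \to N \to E^0 \to E^1 \to \cdots$ is such a resolution, then $\Gamma_{I,J}(E^i) = E^i$ for every $i$, and Theorem \ref{defgijhom} yields
\[
H^i_{I,J}(M,N) = H^i\bigl(\Hom_R(M,\Gamma_{I,J}(E^\bullet))\bigr) = H^i\bigl(\Hom_R(M,E^\bullet)\bigr) = \Ext^i_R(M,N),
\]
which is exactly the desired isomorphism.

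To construct such a resolution I would rely on two closure properties of the class of $(I,J)$-torsion modules. First, quotients of $(I,J)$-torsion modules are $(I,J)$-torsion, which is immediate from the condition $I^n x \subseteq Jx$ being inherited by images. Second, every $(I,J)$-torsion module embeds into an $(I,J)$-torsion injective: given $L$ an $(I,J)$-torsion module and any embedding $L \hookrightarrow E$ with $E$ injective, the inclusion factors as $L = \Gamma_{I,J}(L) \hookrightarrow \Gamma_{I,J}(E) \hookrightarrow E$, and $\Gamma_{I,J}(E)$ is injective by Lemma \ref{G_IJinject}. The resolution is then built step by step in the standard way: take $E^0 = \Gamma_{I,J}(E(N))$ where $E(N)$ denotes an injective hull of $N$, observe that $E^0/N$ is again $(I,J)$-torsion, embed it into $E^1 = \Gamma_{I,J}(E(E^0/N))$, and iterate.

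I do not anticipate a substantive obstacle: beyond Lemma \ref{G_IJinject} and Theorem \ref{defgijhom}, the argument consists only of the two closure properties above, and once these are recorded the construction of the resolution and the identification with $\Ext^i_R(M,N)$ are mechanical.
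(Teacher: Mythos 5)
Your proof is correct and follows essentially the same route as the paper: both arguments compute $H^i_{I,J}(M,N)$ from an injective resolution of $N$ by $(I,J)$-torsion modules and invoke Theorem \ref{defgijhom} to identify $\Gamma_{I,J}(\Hom_R(M,E^j))$ with $\Hom_R(M,\Gamma_{I,J}(E^j))=\Hom_R(M,E^j)$. The only difference is that the paper cites \cite[1.12]{takloc} for the existence of such a resolution, whereas you construct it directly from Lemma \ref{G_IJinject} and the observation that $(I,J)$-torsion passes to quotients.
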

\begin{proof}
From \cite[1.12]{takloc} there exists an injective resolution
$E^\bullet$ of $N$ such that each term is an $(I,J)$-torsion
$R$-module. Then we have by \ref{defgijhom}
\begin{align*}
H^i_{I,J}(M,N)&\cong H^i(\Hom_R(M,\Gamma_{I,J}(E^\bullet)))\\
&\cong H^i(\Hom_R(M,E^\bullet))\\
&=\Ext^i_R(M,N)
\end{align*}
 for all $i\geq 0.$
\end{proof}

When $N$ is a $J$-torsion $R$-module,  we have  the following
proposition.
\begin{proposition}\label{Nisjtorsion}
If $N$ is a $J$-torsion $R$-module, then $$H^i_{I,J}(M,N)\cong
H^i_I(M,N)$$ for all $i\geq 0.$
\end{proposition}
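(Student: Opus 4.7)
The plan is to exploit the fact that $\Gamma_{I,J}$ and $\Gamma_I$ agree on $J$-torsion modules, and then to compute $H^i_{I,J}(M,N)$ via an injective resolution of $N$ all of whose terms are $J$-torsion.

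First I would verify the key pointwise identity: for any $J$-torsion $R$-module $X$, $\Gamma_{I,J}(X) = \Gamma_I(X)$. The inclusion $\Gamma_I(X) \subseteq \Gamma_{I,J}(X)$ is immediate, since $I^n x = 0$ trivially gives $I^n x \subseteq Jx$ (this is also a special case of Proposition \ref{progij}(iii) with $J'=0$). For the reverse inclusion, given $x \in \Gamma_{I,J}(X)$, pick $n$ with $I^n x \subseteq Jx$ and $k$ with $J^k x = 0$ (available because $X$ is $J$-torsion). Iterating the first containment yields $I^{2n}x \subseteq J(I^n x) \subseteq J^2 x$, and inductively $I^{nk} x \subseteq J^k x = 0$, so $x \in \Gamma_I(X)$.

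Next I would produce an injective resolution $E^\bullet$ of $N$ whose terms are all $J$-torsion. The device is standard: embed $N$ into an injective $R$-module $E$ and replace $E$ by $\Gamma_J(E)$. This submodule is still injective (by the same direct-limit argument as in Lemma \ref{G_IJinject} specialized to $I=0$, which uses that $R$ is noetherian), is $J$-torsion by construction, and still contains $N$ because $N$ was already $J$-torsion. The cokernel is a quotient of a $J$-torsion module, hence again $J$-torsion, and the construction iterates. This is the step that requires the most care.

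Finally, for such a resolution the first step gives $\Gamma_{I,J}(E^i) = \Gamma_I(E^i)$ for every $i$, so by Theorem \ref{defgijhom}
\[
\Gamma_{I,J}(M, E^i) \;=\; \Hom_R(M,\Gamma_{I,J}(E^i)) \;=\; \Hom_R(M,\Gamma_I(E^i))
\]
coincides, as a complex in $i$, with the complex whose cohomology computes $H^i_I(M,N)$. Passing to cohomology yields $H^i_{I,J}(M,N) \cong H^i_I(M,N)$ for every $i \geq 0$, as required.
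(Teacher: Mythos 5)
Your proof is correct and follows essentially the same route as the paper: show $\Gamma_{I,J}(X)=\Gamma_I(X)$ for $J$-torsion $X$, then transfer the identity of functors to an isomorphism of derived functors. Where the paper compresses the final step into the phrase ``by the property of derived functors,'' you spell out the construction of a $J$-torsion injective resolution of $N$ via $\Gamma_J$ of injective hulls, which is precisely the justification that phrase tacitly invokes.
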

\begin{proof}  It is obvious that $\Gamma_I(N)\subset \Gamma_{I,J}(N).$
Let $x\in \Gamma_{I,J}(N)$, there exist integers $n,k$  such that
$I^nx\subset Jx$ and $J^kx=0$. Hence $I^{nk}x=0$ and then $x\in
\Gamma_I(N)$. Thus $\Gamma_{I,J}(N)=\Gamma_I(N).$

It remains to prove that $\Gamma_{I,J}(M,N)\cong \Gamma_I(M,N)$.
From \ref{defgijhom} we have
\begin{align*}
\Gamma_{I,J}(M,N)&= \Hom_R(M,\Gamma_{I,J}(N))\\
&=\Hom_{R}(M,\Gamma_I(N))\\
&\cong \Gamma_I(M,N).
\end{align*}
By the property of derived functors, we obtain $H^i_{I,J}(M,N)\cong
H^i_I(M,N)$ for all  $i\geq 0.$
\end{proof}

Let  $J$ be an ideal of $R.$ For an element $a\in R$  the set
$$S_{a,J}=\{a^n+j\mid n\in\mathbb{N}, j\in J\}$$
is a multiplicatively closed subset of $R$ (\cite[2.1]{takloc}). Let
$M$ be a finitely generated $R$-module. Denote by $M_{a,J}$ the
module of fractions of  the $R-$module $M$ with respect to
$S_{a,J}.$ The complex $C^\bullet_{a,J}$ was  given by
$$C^\bullet_{a,J}: 0\to R\to R_{a,J}\to 0.$$

For a sequence $\boldsymbol{a} =\{a_1,a_2,\ldots,a_r\}$ of elements
of $R$, the \v{C}ech complex $C^\bullet_{\boldsymbol{a},J}$ was
defined as
\begin{footnotesize}
\begin{align*}
C^\bullet_{\boldsymbol{a},J}&=\bigotimes_{i=1}^r C^\bullet_{a_i,J}\\
&=\Big( 0\to R\to \prod_{i=1}^r R_{a_i,J}\to \prod_{i<j}(R_{a_i,J})_{a_j,J}\to\cdots\to(\cdots (R_{a_1,J})\cdots)_{a_r,J}\to 0 \Big).
\end{align*}
\end{footnotesize}

In \cite[2.4]{takloc}, there is a natural isomorphism
$H^i_{I,J}(M)\cong H^i(C^\bullet_{\boldsymbol{a},J}\otimes_R M),$
where $\boldsymbol{a}=\{a_1,a_2,\ldots,a_r\}$ is a sequence of
elements of $R$ that generates $I.$

Let $$\mathbf{F}_\bullet: \cdots\longrightarrow F_2\longrightarrow
F_1\longrightarrow F_0\longrightarrow M \longrightarrow 0$$be a free
resolution of $M$ with the finitely generated free modules.

Apply the functor $\Hom_R(-,N)$ to above resolution, we have a
complex
$$\Hom_R(\mathbf{F_\bullet},N): 0 \rightarrow \Hom_R(M,N)\rightarrow \Hom_R(F_0,N) \rightarrow \Hom_R(F_1,N) \rightarrow
....$$
 Then there is a bicomplex
$C^\bullet_{\boldsymbol{a},J}\otimes_R
\Hom_R(\mathbf{F_\bullet},N)=\{C^p_{\boldsymbol{a},J}\otimes_R
\Hom_R(F_q,N)\},$ where $C^p_{\boldsymbol{a},J}$ is the $p$-th
position in the \v{C}ech complex $C^\bullet_{\boldsymbol{a},J}.$
Thus we get a total complex $\tot(M,N)$ of bicomplex
$C^\bullet_{\boldsymbol{a},J}\otimes_R \Hom_R(\mathbf{F_\bullet},N)$
where
$$\tot(M,N)^n=\bigoplus_{p+q=n}C^p_{\boldsymbol{a},J}\otimes_R
\Hom_R(F_q,N).$$

\begin{theorem}\label{cechcomplex}
Let $M$ be a finitely generated $R$-module. Then for all $R$-modules $N$ and $n\geq 0,$
$$H^n_{I,J}(M,N)\cong H^n(\tot(M,N)).$$
\end{theorem}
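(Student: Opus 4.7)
The plan is to derive $H^n(\tot(M,N))\cong H^n_{I,J}(M,N)$ by comparison with the derived-functor expression $H^n_{I,J}(M,N)=H^n(\Hom_R(M,\Gamma_{I,J}(E^\bullet)))$, which is valid for any injective resolution $E^\bullet$ of $N$ thanks to Theorem~\ref{defgijhom}. I would set up a triple complex and play the two associated spectral sequences off each other. Concretely, introduce
$$T^{p,q,r}=C^p_{\boldsymbol{a},J}\otimes_R\Hom_R(F_q,E^r)$$
and partially totalize along $(p,q)$ to a bicomplex $Y^{s,r}=\tot(M,E^r)^s$ with $s=p+q$, then run its two spectral sequences.

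For the filtration that takes $r$-cohomology first, the key inputs are that each $C^p_{\boldsymbol{a},J}$ is flat over $R$ (being an iterated localization) and that $\Hom_R(F_q,-)$ is exact (since $F_q$ is finitely generated free). Together they give
$$H^r\bigl(C^p_{\boldsymbol{a},J}\otimes_R\Hom_R(F_q,E^\bullet)\bigr)\cong C^p_{\boldsymbol{a},J}\otimes_R\Hom_R(F_q,H^r(E^\bullet)),$$
which vanishes for $r>0$ and equals $C^p_{\boldsymbol{a},J}\otimes_R\Hom_R(F_q,N)$ for $r=0$. The $E_1$-page is then concentrated on the row $r=0$, identifies with $\tot(M,N)$, and the sequence collapses to yield $H^n(\tot Y)\cong H^n(\tot(M,N))$.

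For the other filtration (taking $s$-cohomology first), the main auxiliary step is the vanishing
$$H^n(\tot(M,E^r))=\begin{cases}\Hom_R(M,\Gamma_{I,J}(E^r))&n=0,\\ 0&n>0,\end{cases}$$
which I would prove by running the inner bicomplex $C^p\otimes_R\Hom_R(F_q,E^r)$ in its own horizontal direction. Since $F_q\cong R^{n_q}$, one has $\Hom_R(F_q,E^r)\cong(E^r)^{n_q}$, an injective module; thus \cite[2.4]{takloc} gives horizontal cohomology concentrated in degree $p=0$ with value $\Gamma_{I,J}(\Hom_R(F_q,E^r))=\Hom_R(F_q,\Gamma_{I,J}(E^r))$ (using Theorem~\ref{defgijhom}). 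The residual vertical differential computes $\Ext^q_R(M,\Gamma_{I,J}(E^r))$; Lemma~\ref{G_IJinject} says $\Gamma_{I,J}(E^r)$ is injective, so this also vanishes for $q>0$ and equals $\Hom_R(M,\Gamma_{I,J}(E^r))$ at $q=0$. Plugging this back into the second-filtration spectral sequence of $Y$, the $E_1$-page is concentrated on the column $s=0$, and its $E_2$ is $H^r(\Hom_R(M,\Gamma_{I,J}(E^\bullet)))=H^r_{I,J}(M,N)$, yielding $H^n(\tot Y)\cong H^n_{I,J}(M,N)$. Combined with the previous paragraph this gives the claim.

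The main obstacle I anticipate is the triple-complex bookkeeping and verifying that the Čech identification of \cite[2.4]{takloc}, applied to the typically non-finitely-generated intermediate modules $\Hom_R(F_q,E^r)$, really applies in this generality; I expect this is built into the cited proof, which constructs the isomorphism naturally at the level of functors rather than only on finitely generated input. Once that is granted, the result follows from comparing the two collapse outputs.
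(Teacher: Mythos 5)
Your proposal is correct, and it takes a genuinely different route from the paper. The paper proceeds by a $\delta$-functor argument: it observes that both $\{H^n(\tot(M,-))\}_n$ and $\{H^n_{I,J}(M,-)\}_n$ are connected right exact $\delta$-functors, identifies them in degree~$0$ via Theorem~\ref{defgijhom} and \cite[2.3(5)]{takloc}, and then proves that $H^n(\tot(M,E))=0$ for all $n>0$ and every injective $E$ (so the Čech side is effaceable, hence universal, hence agrees with the derived functor). You instead assemble a triple complex $C^p_{\boldsymbol{a},J}\otimes_R\Hom_R(F_q,E^r)$, partially totalize in $(p,q)$, and compare the two spectral sequences of the resulting first-quadrant bicomplex: the $r$-first filtration collapses onto $\tot(M,N)$ by flatness of the $C^p$ and exactness of $\Hom_R(F_q,-)$, while the $s$-first filtration collapses onto $\Hom_R(M,\Gamma_{I,J}(E^\bullet))$. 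The crucial computational input is the same in both proofs — the vanishing $H^n(\tot(M,E))=0$ for injective $E$ and $n>0$, together with $H^0(\tot(M,E))=\Hom_R(M,\Gamma_{I,J}(E))$ — and you prove it by exactly the same inner bicomplex spectral sequence that the paper uses. What the paper's route buys is brevity: one appeals to the uniqueness of universal $\delta$-functors and avoids the extra layer of bookkeeping, at the mild cost of having to note that $\tot(M,-)$ sends short exact sequences of modules to short exact sequences of complexes (since the $C^p$ are flat). What your route buys is an explicit, isomorphism-producing comparison that never invokes the universality machinery — the two edge maps of the collapsing spectral sequences give the desired isomorphism directly. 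Your worry about applying \cite[2.4]{takloc} to the non-finitely-generated modules $\Hom_R(F_q,E^r)$ is unfounded: these are simply finite powers $(E^r)^{n_q}$, hence injective, and the relevant fact (Čech cohomology vanishes in positive degrees on any injective module and computes $\Gamma_{I,J}$ in degree $0$) holds for arbitrary injectives — which is exactly the form in which the paper cites it.
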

\begin{proof}
It is clear that $\{H^n(\tot(M,-))\}_n$ and $\{H^n_{I,J}(M,-)\}_n$
are exact connected right sequences of functors.

We next prove that $H^0(\tot(M,N))\cong H^0_{I,J}(M,N).$ Consider
the homomorphism  $d^0: \tot(M,N)^0\to \tot(M,N)^1.$ As
$C^0_{\boldsymbol{a},J}=R,$ we get
\begin{footnotesize}
$$
H^0(\tot(M,N))  \cong\ker(\hom_R(F_0,N)\overset{d^0}\to
\hom_R(F_1,N)\oplus (C^1_{\boldsymbol{a},J}\otimes_R
\hom_R(F_0,N))).
$$\end{footnotesize}
Thus
\begin{align*}
H^0(\tot(M,N)) & = \hom_R(M,N)\bigcap \Gamma_{I,J}(\hom_R(F_0,N))\\
&= \Gamma_{I,J}(\hom_R(M,N))=H^0_{I,J}(M,N)
\end{align*}
by \cite[2.3(5)]{takloc}.

The proof is completed by  showing that $H^n(\tot(M,E))=0$ for all
$n>0$ and for any injective $R$-module $E.$ It follows from
\cite[10.18]{rotani} a spectral sequence
$$E^{p,q}_2= H''^pH'^q(C^\bullet_{\boldsymbol{a},J}\otimes_R
\Hom_R(\mathbf{F_\bullet},E))\underset{p}\Rightarrow
H^n(\tot(M,E)).$$ Note that
$$E_1^{p,q}=H^q(C^\bullet_{\boldsymbol{a},J}\otimes
\hom_R(F_p,E)).$$
 From the proof of \cite[2.4]{takloc}, $H^i(C^\bullet_{\boldsymbol{a},J}\otimes_R E)=0$ for all
 $i>0$ and for any injective $R-$module $E$.
 Note that $\hom_R(F_q,E)$ is also an injective $R$-module  for all $q\geq
 0.$ Hence
$$E_1^{p,q}=\begin{cases}\begin{array}{ll}
0&,q>0\\
\ker( \Hom_R(F_p,E)\to
\underset{i=1}{\overset{r}{\prod}}\Hom_R(F_p,E)_{a_i,J})&,q=0.
\end{array}
\end{cases}
$$
Combining \cite[2.3(5)]{takloc} with \ref{defgijhom} yields
\begin{align*}
\ker( \Hom_R(F_p,E)\to
\underset{i=1}{\overset{r}{\prod}}\Hom_R(F_p,E)_{a_i,J})&\cong
\Gamma_{I,J}(\hom_R(F_p,E))\\
&\cong \hom_R(F_p,\Gamma_{I,J}(E)).
\end{align*}
It follows
$$
E_2^{p,q}=\begin{cases}\begin{array}{ll} 0&,q>0\\
H^p(\hom_R(F_\bullet,\Gamma_{I,J}(E)))&,q=0.
\end{array}
\end{cases}
$$
As $\Gamma_{I,J}(E)$ is an injective $R-$module, the following
sequence is exact
$$\hom_R(F_\bullet,\Gamma_{I,J}(E)): 0 \rightarrow \Hom_R(M,\Gamma_{I,J}(E))\rightarrow \Hom_R(F_0,\Gamma_{I,J}(E)) \rightarrow$$
$$ \rightarrow\Hom_R(F_1,\Gamma_{I,J}(E)) \rightarrow ....$$
Thus $ E_2^{p,0} = 0$ for all $p>0.$
 From \cite[10.21
(ii)]{rotani} we get $$H^n(\tot(M,E))\cong {E_2^{n,0}} = 0$$ for all
$n>0.$ The proof is complete.
\end{proof}
\bigskip

\section{On artinianness of  \glc modules with respect to a pair of ideals}
\medskip

We have the following theorem.
\begin{theorem}\label{Artinian1}
Assume that $(R,\mathfrak{m})$ is a local ring. Let $M,N$ be two \fg
$R$-modules with $r=\pd(M)$ and $d=\dim(N)$. Then
$$H^{r+d}_{I,J}(M,N)\cong \Ext^r_R(M,H^d_{I,J}(N)).$$Moreover
$H^{r+d}_{I,J}(M,N)$ is an artinian $R-$module.
\end{theorem}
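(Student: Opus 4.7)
My plan is to build a Grothendieck-type spectral sequence and then collapse it by vanishing. Theorem \ref{defgijhom} gives the factorisation $\Gamma_{I,J}(M,-)\cong\Hom_R(M,-)\circ\Gamma_{I,J}(-)$ on the category of $R$-modules (valid because $M$ is finitely generated), and Lemma \ref{G_IJinject} shows that $\Gamma_{I,J}$ preserves injectivity, hence carries injective $R$-modules to $\Hom_R(M,-)$-acyclic ones. The composite-functor spectral sequence of Grothendieck therefore yields
\begin{equation*}
E_2^{p,q} \;=\; \Ext^p_R\bigl(M,\, H^q_{I,J}(N)\bigr) \;\Longrightarrow\; H^{p+q}_{I,J}(M,N).
\end{equation*}

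Next I would apply two vanishing facts to isolate a single surviving term on the antidiagonal $p+q=r+d$. Since $\pd M=r$, $\Ext^p_R(M,-)=0$ for $p>r$. Since $\dim N=d$, I claim $H^q_{I,J}(N)=0$ for $q>d$: applying $\Gamma_{I,J}$ to an injective resolution $E^\bullet$ of $N$ and using the presentation $\Gamma_{I,J}(-)\cong\varinjlim_{\mathfrak{a}\in\tilde{W}(I,J)}\Gamma_\mathfrak{a}(-)$ from \cite[3.2]{takloc}, exactness of filtered colimits yields $H^q_{I,J}(N)\cong\varinjlim_\mathfrak{a}H^q_\mathfrak{a}(N)$, and each term vanishes above $\dim N$ by Grothendieck's vanishing theorem. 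Hence $E_2^{p,q}=0$ outside the rectangle $[0,r]\times[0,d]$. On the antidiagonal $p+q=r+d$ only the corner $E_2^{r,d}$ survives, and every higher differential entering or leaving $E_k^{r,d}$ for $k\ge 2$ has its other endpoint at either $(r-k,d+k-1)$ or $(r+k,d-k+1)$, both outside the rectangle, so it vanishes. Consequently $E_2^{r,d}=E_\infty^{r,d}=H^{r+d}_{I,J}(M,N)$, which yields the asserted isomorphism $H^{r+d}_{I,J}(M,N)\cong\Ext^r_R(M,H^d_{I,J}(N))$.

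For the artinianness, the strategy is to transfer it through the isomorphism just obtained. The essential input is that $H^d_{I,J}(N)$ is itself artinian; this is the pair-of-ideals analogue of Grothendieck's classical artinianness result for $H^d_\mathfrak{m}(N)$, and I would verify it by Melkersson's criterion after checking that $\Supp H^d_{I,J}(N)\subseteq\{\mathfrak{m}\}$ and that $(0:_{H^d_{I,J}(N)}\mathfrak{m})$ is finitely generated. Granted this, choose a free resolution $F_\bullet\to M$ with each $F_i$ finitely generated free; then $\Hom_R(F_r,H^d_{I,J}(N))$ is a finite direct sum of copies of an artinian module and so is artinian, and the subquotient $\Ext^r_R(M,H^d_{I,J}(N))$ inherits artinianness. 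The isomorphism then transfers it to $H^{r+d}_{I,J}(M,N)$.

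The main obstacle is the artinianness of the top-degree module $H^d_{I,J}(N)$: setting up and collapsing the spectral sequence is largely mechanical given Theorem \ref{defgijhom} and Lemma \ref{G_IJinject}, but top-degree artinianness for local cohomology with respect to a pair of ideals is a genuinely substantive ingredient that lies outside the material developed in the previous section and must either be cited or be established in its own right.
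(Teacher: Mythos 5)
Your proof is correct and follows essentially the same route as the paper: the Grothendieck spectral sequence for $\Hom_R(M,-)\circ\Gamma_{I,J}(-)$ (justified by Theorem \ref{defgijhom} and Lemma \ref{G_IJinject}), collapsed by the two bounds $\pd M=r$ and $\dim N=d$ to give $E_2^{r,d}\cong E_\infty^{r,d}\cong H^{r+d}_{I,J}(M,N)$, followed by transfer of artinianness from $H^d_{I,J}(N)$ through a finite free resolution of $M$. The paper simply cites \cite[4.7]{takloc} for the vanishing $H^q_{I,J}(N)=0$ when $q>d$ and \cite[2.1]{chusom} for the artinianness of $H^d_{I,J}(N)$ --- exactly the two points you identify, supplying for the first a valid colimit argument and for the second an honest acknowledgment that it is an external ingredient to be cited or proved.
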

\begin{proof}
Let $G(-)= \Gamma_{I,J}(-)$ and $F(-)=\Hom_R(M,-)$ be functors from
category of $R$-modules to itself. Then $FG=\Gamma_{I,J}(M,-)$ and
$F$ is left exact.  For any injective module $E$
$$R^iF(G(E))=R^i\Hom_R(M, \Gamma_{I,J}(E)) =0$$ for all $i>0,$ as
$\Gamma_{I,J}(E)$ is an injective $R-$module. By
\cite[10.47]{rotani}
 there is a Grothendieck
spectral sequence
$$E_2^{pq}={\Ext}^p_R(M,H^q_{I,J}(N))\underset{p}{\Longrightarrow} H^{p+q}_{I,J}(M,N).$$
We now consider the homomorphisms of the spectral
$$E_k^{r-k,d+k-1}\to E_k^{r,d}\to E_k^{r+k,d+1-k}.$$
 We have $H^q_{I,J}(N)=0$ for all $q>d$  by  \cite[4.7]{takloc}. Then  $E_2^{pq}=0$ for all $p>r$
 or $q>d.$ Thus
$E_k^{r-k,d+k-1}=E_k^{r+k,d+1-k}=0$ for all $k\geq2,$ so
$$E_2^{r,d} = E_3^{r,d} = ... = E_{\infty}^{r,d}.$$
It remains to prove that $E_{\infty}^{r,d}\cong H_{I,J}^{r+d}(M,N).$
Indeed, there is a filtration $\Phi$ of $H^{r+d}=H_{I,J}^{r+d}(M,N)$
such that
$$0 = \Phi^{r+d+1}H^{r+d}\subseteq  \Phi^{r+d}H^{r+d}\subseteq
\ldots \subseteq  \Phi^{1}H^{r+d} \subseteq
\Phi^{0}H^{r+d}=H_{I,J}^{r+d}(M,N)$$ and
$$E_{\infty}^{i,r+d-i}=\Phi^iH^{r+d}/\Phi^{i+1}H^{r+d},\ 0\leq
i\leq r+d.$$ From the above proof  we have $E_2^{i,r+d-i} =
\Ext^i_R(M,H_{I,J}^{r+d-i}(N))=0$ for all $i\not= r.$ Hence
$$\Phi^{r+1}H^{r+d} = \Phi^{r+2}H^{r+d} = \ldots =  \Phi^{r+d+1}H^{r+d} = 0$$ and
$$\Phi^{r}H^{r+d} = \Phi^{r-1}H^{r+d} = \ldots = \Phi^{0}H^{r+d} =
H^{r+d}_{I,J}(M,N).$$ This gives $$E_{\infty}^{r,d} \cong \Phi^r
H^{r+d}/\Phi^{r+1}H^{r+d}\cong H^{r+d}_{I,J}(M,N).$$Thus
$\Ext^r_R(M,H^d_{I,J}(N))\cong H^{r+d}_{I,J}(M,N).$ It follows from
\cite[2.1]{chusom} that $H^d_{I,J}(N)$ is an artinian $R-$module.
Therefore $H^{r+d}_{I,J}(M,N)$ is also  an artinian $R-$module.
\end{proof}
Next theorem, we show the connection between the artinianness of $H^i_{I,J}(N)$ and $H^i_{I,J}(M,N)$.
\begin{theorem}\label{Artinian2}
Let $M$ be a \fg  $R$-modules and $N$ an $R-$module. Let  $t$ be a
positive integer. If $H^i_{I,J}(N)$ is artinian for all $i<t,$ then
\begin{enumerate}
\item[(i)] $H^i_{I,J}(M,N)$  is artinian for all $i<t.$
\item[(ii)] $\Ext^{i}_R(R/\mathfrak{a}, N)$ is artinian for all $i<t$ and for all $\mathfrak{a}\in \tilde{W}(I,J).$
\end{enumerate}
\end{theorem}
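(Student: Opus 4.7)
The plan is to exploit the Grothendieck spectral sequence constructed in the proof of Theorem \ref{Artinian1} for part (i), and then to derive part (ii) as a specialization of part (i) to the module $M = R/\mathfrak{a}$.

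For part (i), I rely on the spectral sequence
$$E_2^{p,q} = \Ext^p_R(M, H^q_{I,J}(N)) \Longrightarrow H^{p+q}_{I,J}(M, N),$$
which is available because $\Gamma_{I,J}$ preserves injectivity by Lemma \ref{G_IJinject} and $\Gamma_{I,J}(M,-) = \Hom_R(M, \Gamma_{I,J}(-))$ by Theorem \ref{defgijhom}. A standard consequence of $M$ being finitely generated over the Noetherian ring $R$ is that $\Ext^p_R(M, A)$ is artinian whenever $A$ is artinian: take a free resolution $F_\bullet \to M$ by finitely generated free modules, so each $\Hom_R(F_p, A)$ is a finite direct sum of copies of $A$, hence artinian, and the cohomology of a complex of artinian modules is artinian. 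For any $n < t$ and any $(p,q)$ with $p + q = n$, the inequality $q \leq n < t$ forces $H^q_{I,J}(N)$ to be artinian by hypothesis, so $E_2^{p,q}$ is artinian. Since subquotients of artinian modules are artinian, every $E_\infty^{p,q}$ with $p + q = n$ is artinian; combined with the finite filtration of $H^n_{I,J}(M,N)$ having these as successive quotients and the fact that extensions of artinian modules are artinian, this gives (i).

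For part (ii), fix $\mathfrak{a} \in \tilde{W}(I,J)$ and specialize (i) to the finitely generated module $M = R/\mathfrak{a}$; it suffices to prove $H^i_{I,J}(R/\mathfrak{a}, -) \cong \Ext^i_R(R/\mathfrak{a}, -)$ as functors. For any $R$-module $L$, $\Hom_R(R/\mathfrak{a}, L) \cong (0 :_L \mathfrak{a})$ is annihilated by $\mathfrak{a}$. Since $I^n \subseteq \mathfrak{a} + J$ for some $n$ (because $\mathfrak{a} \in \tilde{W}(I,J)$), any $x$ with $\mathfrak{a} x = 0$ satisfies $I^n x \subseteq (\mathfrak{a}+J)x = Jx$, so $\Hom_R(R/\mathfrak{a}, L)$ is itself $(I,J)$-torsion. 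Consequently $\Gamma_{I,J}(R/\mathfrak{a}, L) = \Gamma_{I,J}(\Hom_R(R/\mathfrak{a}, L)) = \Hom_R(R/\mathfrak{a}, L)$, and the left exact functors $\Gamma_{I,J}(R/\mathfrak{a}, -)$ and $\Hom_R(R/\mathfrak{a}, -)$ are naturally isomorphic. Their right derived functors therefore agree, and combined with (i) this forces $\Ext^i_R(R/\mathfrak{a}, N)$ to be artinian for all $i < t$.

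The main obstacle is really just the bookkeeping in (i), namely tracking the artinian property through the successive pages of the spectral sequence and through the finite filtration of $H^n_{I,J}(M,N)$. Once that is in place, part (ii) becomes essentially formal: the only substantive observation is that, for $\mathfrak{a} \in \tilde{W}(I,J)$, the functor $\Gamma_{I,J}(R/\mathfrak{a}, -)$ is indistinguishable from $\Hom_R(R/\mathfrak{a}, -)$, so the generalized local cohomology collapses to $\Ext$.
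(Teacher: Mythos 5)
Your proof is correct, but it follows a genuinely different route from the paper's. For (i), the paper argues by induction on $t$ via dimension shifting along the exact sequence $0\to N\to E(N)\to E(N)/N\to 0$, using that the injective hull $E(N)$ is acyclic for both $H^\bullet_{I,J}(-)$ and $H^\bullet_{I,J}(M,-)$. You instead apply the Grothendieck spectral sequence $E_2^{p,q}=\Ext^p_R(M,H^q_{I,J}(N))\Rightarrow H^{p+q}_{I,J}(M,N)$ directly, observing that for $p+q=n<t$ each $E_2^{p,q}$ is artinian (since $\Ext^p_R(M,-)$ of an artinian module is artinian when $M$ is finitely generated), then passing artinianness through subquotients and the finite filtration. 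Both are valid; the paper's induction is more elementary and avoids the spectral-sequence machinery, while yours is shorter once the spectral sequence is granted and makes the mechanism transparent. For (ii), the paper again uses induction on $t$, with the base case $t=1$ handled via the sequence $0\to\Gamma_{\mathfrak{a}}(N)\to N\to N/\Gamma_{\mathfrak{a}}(N)\to 0$ and the inclusion $\Gamma_{\mathfrak{a}}(N)\subset\Gamma_{I,J}(N)$. You instead show that for $\mathfrak{a}\in\tilde W(I,J)$ the functor $\Gamma_{I,J}(R/\mathfrak{a},-)$ equals $\Hom_R(R/\mathfrak{a},-)$ on the nose — because $(0:_L\mathfrak{a})$ is automatically $(I,J)$-torsion — and hence $H^i_{I,J}(R/\mathfrak{a},-)\cong\Ext^i_R(R/\mathfrak{a},-)$ as derived functors, reducing (ii) to the special case $M=R/\mathfrak{a}$ of (i). This identification is a nice clean observation not made explicit in the paper, and it gives a slicker proof of (ii) than the paper's inductive argument.
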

\begin{proof}
(i) We use induction on $t$. When $t=1,$ by \ref{defgijhom} we have
$$\Gamma_{I,J}(M,N)= \Hom_R(M,\Gamma_{I,J}(N)).$$ Since
$\Gamma_{I,J}(N)$ is artinian, the statement is true in this case.

Let $t>1$ and we assume that the statement is true for $t-1$ and for
any $R$-module $N$. Denote by $E(N)$ the injective hull of $N$.
Applying the functors $\Gamma_{I,J}(-)$ and $\Gamma_{I,J}(M,-)$ to
the following short exact sequence
$$0\to N\to E(N)\to E(N)/N\to 0$$
 we get isomorphisms
$$H^i_{I,J}(E(N)/N)\cong H^{i+1}_{I,J}(N)$$
and
$$
H^i_{I,J}(M,E(N)/N)\cong H^{i+1}_{I,J}(M,N)
$$
for all $i>0.$ From the hypothesis, $H^{i}_{I,J}(N)$ is artinian for
all $i<t.$  It follows that $H^i_{I,J}(E(N)/N)$ is also artinian for
all $ i< t-1$.
 By the inductive hypothesis on $E(N)/N$,  $H^i_{I,J}(M,E(N)/N)$ is artinian for all $i< t-1$.
 We conclude from the second isomorphism that  $H^i_{I,J}(M,N)$ is artinian for all $i<t.$

(ii) The proof is by induction on $t$. When $t=1,$  the short exact
sequence
$$0\to \Gamma_\mathfrak{a}(N)\to N\to N/\Gamma_\mathfrak{a}(N)\to 0.$$
deduces an exact sequence {\small $$0\to
\Hom_R(R/\mathfrak{a},\Gamma_\mathfrak{a}(N))\to
\Hom_R(R/\mathfrak{a},N)\to
\Hom_R(R/\mathfrak{a},N/\Gamma_\mathfrak{a}(N))\to\cdots$$} As
$N/\Gamma_\mathfrak{a}(N)$ is $\mathfrak{a}$-torsion-free, we have
$\Hom_R(R/\mathfrak{a},N/\Gamma_\mathfrak{a}(N))=0$ and then
$\Hom_R(R/\mathfrak{a},N)\cong
\Hom_R(R/\mathfrak{a},\Gamma_\mathfrak{a}(N))$. Note that
$\Gamma_\mathfrak{a}(N)\subset \Gamma_{I,J}(N).$ By the hypothesis,
 $\Gamma_\mathfrak{a}(N)$ is an artinian $R$-module
and then $\Hom_R(R/\mathfrak{a},\Gamma_\mathfrak{a}(N))$ is also an
artinian $R$-module.

The proof for $t>1$ is similar to (i).
\end{proof}
In \cite[2.4]{chusom}, when $(R,\mathfrak{m})$ is a local ring and
$N$ is a \fg $R$-module, there is an equality
$$\inf\{i\mid H^i_{I,J}(N)\text{ is not artinian}\}=\inf\{ \mathrm{depth} N_\p \mid \P\in W(I,J)\setminus\{\mathfrak{m}\}\}.$$
We have the following consequence.

\begin{corollary} Let $(R,\mathfrak{m})$ be a local ring. If $M$ and $N$ are two \fg $R$-modules, then
$$\inf\{\mathrm{depth} N_\P\mid \P\in W(I,J)\setminus\{\mathfrak{m}\}\} \leq \inf\{i \mid H^i_{I,J}(M,N)\text{ is not artinian}\}.$$
\end{corollary}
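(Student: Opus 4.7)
The plan is a direct chaining of the two input results already recalled in the paper. Let $s := \inf\{\mathrm{depth}\,N_{\mathfrak{p}} \mid \mathfrak{p}\in W(I,J)\setminus\{\mathfrak{m}\}\}$, with the convention $\inf\emptyset = +\infty$. The goal is equivalent to showing that $H^i_{I,J}(M,N)$ is artinian for every $i<s$, since that inequality rephrases as $s\leq \inf\{i\mid H^i_{I,J}(M,N)\text{ is not artinian}\}$.

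The first step is to invoke the equality from \cite[2.4]{chusom} recalled immediately above the statement: since $(R,\mathfrak{m})$ is local and $N$ is finitely generated,
$$\inf\{i \mid H^i_{I,J}(N) \text{ is not artinian}\}=s,$$
so in particular $H^i_{I,J}(N)$ is an artinian $R$-module for every $i<s$. The second step is to feed this information into Theorem \ref{Artinian2}(i), whose hypothesis is exactly the artinianness of $H^i_{I,J}(N)$ for $i<t$ (together with $M$ finitely generated) and whose conclusion is the artinianness of $H^i_{I,J}(M,N)$ in the same range. Applying it with $t=s$ when $s\geq 1$ yields the desired artinianness of $H^i_{I,J}(M,N)$ for all $i<s$; the case $s=0$ makes the asserted inequality trivial, and the case $s=+\infty$ is handled by applying Theorem \ref{Artinian2}(i) to every positive integer $t$ in turn.

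There is no real analytic obstacle: the proof is purely an assembly of the two cited results, and the only bookkeeping concerns the boundary conventions for $\inf$. What the corollary really records is that the passage from $H^i_{I,J}(N)$ to $H^i_{I,J}(M,N)$ through Theorem \ref{Artinian2} preserves, or can only raise, the lower bound on the first index at which artinianness fails; combined with the depth-theoretic description of that bound in the absolute case \cite[2.4]{chusom}, one gets the claimed depth inequality on the nose.
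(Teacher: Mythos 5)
Your proof is correct and takes essentially the same route as the paper: both arguments chain the equality of \cite[2.4]{chusom} with Theorem \ref{Artinian2}(i) to transfer the artinianness bound from $H^i_{I,J}(N)$ to $H^i_{I,J}(M,N)$. The only difference is presentational — you unwind the inequality into a pointwise statement and handle the $\inf$ boundary conventions explicitly, while the paper states the inequality between the two infima directly.
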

\begin{proof}
From \ref{Artinian2}, We have the following inequality
$$\inf\{i \mid H^i_{I,J}(N)\text{ is not artinian}\}\leq \inf\{i \mid H^i_{I,J}(M,N)\text{ is not artinian}\}.$$
Thus the conclusion follows from \cite[2.4]{chusom}.
\end{proof}

\begin{theorem}\label{artinian3}
Let $I,J$ be two ideals of the local ring $(R,\mathfrak{m})$ such
that $\sqrt{I+J}=\mathfrak{m}.$ Assume that $M, N$ are two \fg
$R$-modules with $\dim(N)<\infty$ and $t$ is an non-negative
integer. If $H^i_{I,J}(M,N)$ is an artinian $R$-module for all
$i>t,$ then $H^t_{I,J}(M,N)/JH^t_{I,J}(M,N)$ is also an artinian
$R$-module.
\end{theorem}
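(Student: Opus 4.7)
\noindent\emph{Proof plan.}
The plan is to induct on $d=\dim(N)$. The base case $d=0$ is immediate from Theorem~\ref{artiniannisartinian}: a \fg $R$-module of dimension zero is artinian, so every $H^i_{I,J}(M,N)$ is artinian and in particular so is $H^t_{I,J}(M,N)/JH^t_{I,J}(M,N)$.

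For the inductive step $d>0$, I would first replace $N$ by $N/\Gamma_\mathfrak{m}(N)$ in order to assume $\Gamma_\mathfrak{m}(N)=0$ (equivalently $\mathrm{depth}\,N\geq 1$). The submodule $\Gamma_\mathfrak{m}(N)$ is \fg and $\mathfrak{m}$-torsion, hence artinian, so Theorem~\ref{artiniannisartinian} gives that $H^i_{I,J}(M,\Gamma_\mathfrak{m}(N))$ is artinian for every $i$; the long exact sequence then shows that both the hypothesis and the conclusion transfer between $N$ and $N/\Gamma_\mathfrak{m}(N)$. Using $\sqrt{I+J}=\mathfrak{m}$ and $\mathfrak{m}\notin\Ass(N)$, the inclusion $\mathfrak{m}^{k}\subseteq I+J$ for some $k$ combined with prime avoidance yields an $N$-regular element in $I+J$; in the favorable situation one can choose an $N$-regular element $x\in J$. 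The short exact sequence $0\to N\xrightarrow{x}N\to N/xN\to 0$ (with $\dim(N/xN)\leq d-1$) produces the long exact sequence
$$\cdots\to H^i_{I,J}(M,N)\xrightarrow{x}H^i_{I,J}(M,N)\to H^i_{I,J}(M,N/xN)\to H^{i+1}_{I,J}(M,N)\to\cdots,$$
in which, for $i>t$, the middle term is sandwiched between two artinian modules and hence artinian. The inductive hypothesis applied to $N/xN$ gives that $H^t_{I,J}(M,N/xN)/JH^t_{I,J}(M,N/xN)$ is artinian.

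Setting $A=H^t_{I,J}(M,N)$ and $B=H^t_{I,J}(M,N/xN)$, the long exact sequence extracts a short exact sequence
$$0\to A/xA\to B\to D\to 0,$$
where $D$ is a submodule of the artinian module $H^{t+1}_{I,J}(M,N)$ and hence artinian. Since $R/J$ admits a resolution by \fg free $R$-modules, both $D\otimes_R R/J$ and $\Tor_1^R(D,R/J)$ are artinian. Tensoring the above short exact sequence with $R/J$ and using $xA\subseteq JA$ to identify $(A/xA)\otimes_R R/J=A/JA$, I obtain the exact sequence
$$\Tor_1^R(D,R/J)\to A/JA\to B/JB\to D/JD\to 0.$$
Since $B/JB$ is artinian by induction and the kernel of $A/JA\to B/JB$ is a quotient of the artinian module $\Tor_1^R(D,R/J)$, the module $A/JA$ is an extension of two artinian modules and is therefore artinian.

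The main obstacle is the residual case where $J$ contains no $N$-regular element, i.e., $J\subseteq\mathfrak{p}$ for some $\mathfrak{p}\in\Ass(N)$. I would then invoke the decomposition $0\to\Gamma_J(N)\to N\to N/\Gamma_J(N)\to 0$: the quotient $N/\Gamma_J(N)$ is $J$-torsion-free and so admits a $J$-regular element, allowing the argument above to apply to it. The $J$-torsion submodule $\Gamma_J(N)$ is \fg and killed by a power $J^k$, and by Proposition~\ref{Nisjtorsion} one has $H^i_{I,J}(M,\Gamma_J(N))\cong H^i_I(M,\Gamma_J(N))$; handling this piece requires a nested induction, for instance on $k$ or on $\dim\Gamma_J(N)$, making use of the $J$-adic filtration whose graded pieces are $R/J$-modules and combining it with the long exact sequence to transfer the artinianness hypothesis from $N$ to both $\Gamma_J(N)$ and $N/\Gamma_J(N)$.
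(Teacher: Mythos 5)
Your overall strategy coincides with the paper's: induct on $d=\dim(N)$, pass to a $J$-torsion-free quotient so as to find an $N$-regular element $x\in J$, and run the $A/JA$ tensor computation from the two short exact sequences extracted from the long exact sequence. That computation is correct and is essentially identical to what the paper does (the paper phrases it through $\im f$ and $\im g$ rather than $A/xA$, $B$, $D$, but it is the same algebra; both use $x\in J$ to see that multiplication by $x$ is zero on $A/JA$).

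There is, however, a genuine gap in your handling of the ``residual case'' $\Gamma_J(N)\neq 0$, and that gap is exactly where the paper's one step you omit does all the work. You never use the hypothesis $\sqrt{I+J}=\mathfrak{m}$ except to observe that $I+J$ contains an $N$-regular element, which does not put a regular element into $J$. The paper instead begins by invoking Proposition~\ref{progij}(vi),(vii) to replace $I$ by $\mathfrak{m}$: since $\sqrt{I+J}=\mathfrak{m}$, one has $H^i_{I,J}(M,N)=H^i_{I+J,J}(M,N)=H^i_{\mathfrak{m},J}(M,N)$. After this reduction, $\Gamma_J(N)$ is dispatched in one stroke: by Proposition~\ref{Nisjtorsion}, $H^i_{\mathfrak{m},J}(M,\Gamma_J(N))\cong H^i_{\mathfrak{m}}(M,\Gamma_J(N))$, and the latter is artinian for every $i$ because $M$ and $\Gamma_J(N)$ are finitely generated over the local ring (this is the cited result \cite[2.2]{divonv}). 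That is what makes the transfer of artinianness across $0\to\Gamma_J(N)\to N\to N/\Gamma_J(N)\to 0$ unconditional, with no nested induction. Your sketch ``nested induction on $k$ or on $\dim\Gamma_J(N)$'' is not worked out, and without the reduction $I\rightsquigarrow\mathfrak{m}$ there is no a priori reason for $H^i_I(M,\Gamma_J(N))$ to be artinian; to make your route rigorous you would still effectively need the observation that on $J$-torsion modules $H^i_I$ agrees with $H^i_{I+J}=H^i_{\mathfrak{m}}$, which is just the same reduction in disguise. Finally, the preliminary step of quotienting out $\Gamma_{\mathfrak{m}}(N)$ is harmless but does nothing toward producing an $N$-regular element in $J$, so it can be dropped; the split that actually matters is the one by $\Gamma_J(N)$.

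Your base case also differs from the paper's: the paper identifies a $0$-dimensional \fg $N$ as $(\mathfrak{m},J)$-torsion and applies Proposition~\ref{H_IJExt} to get $H^i_{\mathfrak{m},J}(M,N)\cong\Ext^i_R(M,N)$, whereas you appeal to Theorem~\ref{artiniannisartinian}. Both are fine.
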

\begin{proof}
Combining \ref{progij}(vi) with \ref{progij}(vii) we conclude that
$H^i_{I,J}(M,N)=H^i_{\mathfrak{m},J}(M,N)$ for all $i\geq 0,$ as
$\sqrt{I+J}=\mathfrak{m}.$ Thus, without loss  of generality we can
assume that $I=\mathfrak{m}.$

We now use induction on $\dim (N)=d.$ When $d=0,$ $N$ is
$\mathfrak{m}$-torsion and then  $N$ is $(\mathfrak{m},J)$-torsion.
From \ref{H_IJExt}, there is an isomorphism
$H^i_{\mathfrak{m},J}(M,N)\cong \Ext^i_R(M,N)$ for all $i\geq 0.$
Since $N$ is artinian, it follows that $H^i_{\mathfrak{m},J}(M,N)$
is an artinian $R$-module for all $i\geq 0.$ Therefore the statement
is true in this case.

Let $d>0.$ The short exact sequence
$$0\to \Gamma_J(N)\to N\to N/\Gamma_J(N)\to 0.$$
induces a long exact sequence
$$
H^t_{\mathfrak{m},J}(M,\Gamma_J(N))\overset{\alpha}\to H^t_{\mathfrak{m},J}(M,N)\overset{\beta}\to H^t_{\mathfrak{m},J}(M,N/\Gamma_J(N))\overset{\gamma}\to\cdots
$$
Since $\Gamma_J(N)$ is a $J$-torsion $R$-module,  there is an
isomorphism $$H^i_{\mathfrak{m},J}(M,\Gamma_J(N))\cong
H^i_\mathfrak{m}(M,\Gamma_J(N))$$ by \ref{Nisjtorsion}. From
\cite[2.2]{divonv}   $H^i_{\mathfrak{m},J}(M,\Gamma_J(N))$ is
artinian for all $i.$

From the long exact sequence, we get two short exact sequences
$$
0\to \im\alpha\to H^t_{\mathfrak{m},J}(M,N)\to \im \beta \to 0
$$
and
$$
0\to \im\beta \to
H^t_{\mathfrak{m},J}(M,N/\Gamma_J(N))\to\im\gamma\to 0.
$$
Two above exact sequences deduce long exact sequences
$$
\cdots\im\alpha/J\im\alpha\to
H^t_{\mathfrak{m},J}(M,N)/JH^t_{\mathfrak{m},J}(M,N) \to
\im\beta/J\im\beta\to 0
$$
and
$$
\cdots\to \Tor^R_1(R/J,\im\gamma)\to \im\beta/J\im\beta\rightarrow
$$
$$
\to
H^t_{\mathfrak{m},J}(M,N/\Gamma_J(N))/JH^t_{\mathfrak{m},J}(M,N/\Gamma_J(N))\to
\im\gamma/J\im\gamma\to 0.
$$
Note that $\im\alpha$ and $\im\gamma$ are artinian $R$-modules. The
proof is completed by showing that
$H^t_{\mathfrak{m},J}(M,N/\Gamma_J(N))/JH^t_{\mathfrak{m},J}(M,N/\Gamma_J(N))$
is an artinian $R-$module.

Let $\overline{N}=N/\Gamma_J(N)$, then $\overline{N}$ is
$J$-torsion-free. Thus there exists an element $x\in J$ that is a
non-zerodivisor on $\overline{N}.$

The short exact sequence
$$0\to \overline{N}\overset{.x}\to \overline{N}\to \overline{N}/x\overline{N}\to 0$$
implies the following long exact sequence
$$
\cdots \to H^t_{\mathfrak{m},J}(M,\overline{N})\overset{f}\to
H^t_{\mathfrak{m},J}(M,\overline{N}/x\overline{N})\overset{g}\to
H^{t+1}_{\mathfrak{m},J}(M,\overline{N})\to\cdots
$$
From the hypothesis, we get that
$H^i_{\mathfrak{m},J}(M,\overline{N}/x\overline{N})$ is artinian for
all $i>t.$ As $\dim(\overline{N}/x\overline{N})\leq d-1$,
$H^t_{\mathfrak{m},J}(M,\overline{N}/x\overline{N})/JH^t_{\mathfrak{m},J}(M,\overline{N}/x\overline{N})$
is artinian by the inductive hypothesis.

We now consider two exact sequences
$$0\to \im f \to H^t_{\mathfrak{m},J}(M,\overline{N}/x\overline{N})\to \im g\to 0$$
and
$$H^t_{\mathfrak{m},J}(M,\overline{N})\overset{.x}\to H^t_{\mathfrak{m},J}(M,\overline{N})\to \im f\to 0.$$
They gives two long exact sequences
$$\Tor_1^R(R/J, \im g)\to \im f/ J \im f\to$$
$$
\to H^t_{\mathfrak{m},J}(M,\overline{N}/x\overline{N})/ J
H^t_{\mathfrak{m},J}(M,\overline{N}/x\overline{N}) \to \im g/ J \im
g\to 0
$$
and
$$H^t_{\mathfrak{m},J}(M,\overline{N})/JH^t_{\mathfrak{m},J}(M,\overline{N})\overset{.x}\to H^t_{\mathfrak{m},J}(M,\overline{N})/JH^t_{\mathfrak{m},J}(M,\overline{N})\to$$
$$
\to \im f/J \im f\to 0.
$$
Since $x\in J,$  we obtain from the exact sequence that
$$H^t_{\mathfrak{m},J}(M,\overline{N})/JH^t_{\mathfrak{m},J}(M,\overline{N})\cong \im f/J
\im f.$$

On other hand, $\Tor_1^R(R/J,\im g)$ is  artinian, as  $\im g\subset
H^{t+1}_{\mathfrak{m},J}(M,\overline{N})$ an  artinian $R-$module.
Hence $ \im f/ J \im f$ is an artinian $R-$module and the proof is
complete.
\end{proof}

\begin{theorem}\label{artinian6}
Let $M,N$ be two \fg $R$-modules and $t$ a positive integer such
that $H^t_{I,J}(M,R/\P)$ is artinian for all $\P\in \Supp(N)$. Then
$H^t_{I,J}(M,N)$ is also artinian.
\end{theorem}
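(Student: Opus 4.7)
The plan is to reduce to the cyclic case $N=R/\mathfrak{p}$ via a prime filtration and then run a two-step sandwich argument on a single long exact sequence, using that $H^i_{I,J}(M,-)$ is the right derived functor of a left exact functor and hence carries short exact sequences of modules to long exact sequences.

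First I would invoke the standard fact that, since $R$ is noetherian and $N$ is finitely generated, there is a finite filtration
\begin{equation*}
0 = N_0 \subset N_1 \subset \cdots \subset N_n = N
\end{equation*}
with $N_i/N_{i-1}\cong R/\mathfrak{p}_i$ for some $\mathfrak{p}_i\in\Supp(N)$ (each $\mathfrak{p}_i$ contains $\Ann(N)$, so it lies in $\Supp(N)$). I then proceed by induction on $n$, the length of this filtration. The base case $n=0$ is trivial since $N=0$.

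For the inductive step, I would apply $H^{\bullet}_{I,J}(M,-)$ to
\begin{equation*}
0 \to N_{i-1} \to N_i \to R/\mathfrak{p}_i \to 0
\end{equation*}
to obtain the fragment of the long exact sequence
\begin{equation*}
H^t_{I,J}(M,N_{i-1}) \xrightarrow{\varphi} H^t_{I,J}(M,N_i) \xrightarrow{\psi} H^t_{I,J}(M,R/\mathfrak{p}_i).
\end{equation*}
By the inductive hypothesis $H^t_{I,J}(M,N_{i-1})$ is artinian, so $\im\varphi$ is artinian as a quotient of an artinian module. By hypothesis $H^t_{I,J}(M,R/\mathfrak{p}_i)$ is artinian, and $H^t_{I,J}(M,N_i)/\im\varphi\cong\im\psi$ embeds into it, hence is artinian. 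The short exact sequence
\begin{equation*}
0\to \im\varphi \to H^t_{I,J}(M,N_i)\to \im\psi \to 0
\end{equation*}
then forces $H^t_{I,J}(M,N_i)$ to be artinian, completing the induction.

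There is no real obstacle here; the only point that requires care is the verification that the inductive hypothesis remains applicable at each stage, which is immediate because the filtration of $N_{i-1}$ inherited from that of $N$ has length $i-1$, and the cyclic quotients $R/\mathfrak{p}_j$ with $j\leq i-1$ still satisfy the hypothesis since $\mathfrak{p}_j\in\Supp(N)$. Applying the result at $i=n$ yields the desired artinianness of $H^t_{I,J}(M,N)$.
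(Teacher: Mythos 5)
Your proposal is correct and follows the same approach as the paper: take a prime filtration of $N$ with quotients $R/\mathfrak{p}_i$ for $\mathfrak{p}_i\in\Supp(N)$, and run induction along the filtration using the long exact sequence of $H^{\bullet}_{I,J}(M,-)$. You merely spell out the image/coimage sandwich step that the paper leaves implicit.
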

\begin{proof}

As $N$ is finitely generated, there is a chain of submodules of $N$
$$0=N_0\subset N_1\subset N_2\subset\ldots\subset N_k=N$$
such that $N_i/N_{i-1}\cong R/\P_i$ for some $\P_i\in \Supp(N).$

For each $1\leq i\leq k$, the short exact sequence
$$0\to N_{i-1}\to N_i\to R/\P_i\to 0$$
deduces a long exact sequence
$$
\cdots\to H^t_{I,J}(M,N_{i-1})\to H^t_{I,J}(M,N_{i})\to H^t_{I,J}(M,R/\P_i)\to\cdots
$$
In particular, $H^t_{I,J}(M,N_1)\cong H^t_{I,J}(M,R/\P_1).$ From the
exact sequence, it follows that $H^t_{I,J}(M,N_i)$ is artinian for
all $1\leq i\leq k.$ This finishes the proof.
\end{proof}

From Theorem \ref{artinian6} we have the following immediate
consequence.

\begin{corollary}\label{coroSupp}
Let $M,N$ be finitely generated $R$-modules and $t$ a positive integer. Assume that $H^t_{I,J}(M,R/\P)$ is artinian for all $\P\in \Supp(N).$

(i) If $L$ is a finitely generated $R$-module such that
$\Supp(L)\subset \Supp(N),$  then $H^t_{I,J}(M,L)$ is artinian.

(ii) If $\mathfrak{a}$ is an ideal of $R$ such that
$V(\mathfrak{a})\subset \Supp(N),$ then
$H^t_{I,J}(M,R/\mathfrak{a})$ is artinian.
\end{corollary}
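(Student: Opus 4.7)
The plan is to obtain both parts directly from Theorem \ref{artinian6}, which asserts that $H^t_{I,J}(M,N')$ is artinian for a finitely generated $R$-module $N'$ whenever $H^t_{I,J}(M,R/\P)$ is artinian for every $\P\in\Supp(N')$. So the entire task reduces to verifying the support hypothesis of that theorem for the modules appearing in (i) and (ii).

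For part (i), I would argue as follows. Since $\Supp(L)\subset\Supp(N)$ by hypothesis, every prime $\P\in\Supp(L)$ already lies in $\Supp(N)$, and the standing assumption of the corollary guarantees that $H^t_{I,J}(M,R/\P)$ is artinian for all such $\P$. Because $L$ is a finitely generated $R$-module, Theorem \ref{artinian6} applied to $L$ in place of $N$ immediately yields that $H^t_{I,J}(M,L)$ is artinian. There is no obstacle here; part (i) is just a specialization of Theorem \ref{artinian6}.

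For part (ii), I would take $L=R/\mathfrak{a}$, which is a finitely generated $R$-module, and observe the standard identity $\Supp(R/\mathfrak{a})=V(\mathfrak{a})$. The hypothesis $V(\mathfrak{a})\subset\Supp(N)$ is therefore exactly the hypothesis of part (i) for $L=R/\mathfrak{a}$, so part (i) gives that $H^t_{I,J}(M,R/\mathfrak{a})$ is artinian. The whole corollary is essentially a restatement of Theorem \ref{artinian6}, and I do not anticipate any technical obstacle; the only thing worth writing out explicitly is the support inclusion $\Supp(L)\subset\Supp(N)$ (respectively $V(\mathfrak{a})\subset\Supp(N)$) feeding the prime-by-prime hypothesis of Theorem \ref{artinian6}.
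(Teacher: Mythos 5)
Your proof is correct and is exactly the intended argument: the paper labels the corollary an ``immediate consequence'' of Theorem \ref{artinian6} and gives no written proof, and your reduction via $\Supp(L)\subset\Supp(N)$ for (i) and $\Supp(R/\mathfrak{a})=V(\mathfrak{a})$ for (ii) is precisely what the authors have in mind.
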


 In the following theorem, we study the artinianness of
$H^i_{I,J}(M,N)$ when $N$ is artinian.

\begin{theorem}\label{artiniannisartinian}
Let $M$ be a finitely generated $R$-module and $N$ an artinian
$R$-module. Then $H^i_{I,J}(M,N)$ is artinian for all $i\geq 0.$
\end{theorem}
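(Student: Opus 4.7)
The plan is to prove by induction on $i$ the statement ``$H^i_{I,J}(M,L)$ is artinian for every finitely generated $R$-module $M$ and every artinian $R$-module $L$''. For the base case $i=0$, Theorem~\ref{defgijhom} gives $H^0_{I,J}(M,L) = \Hom_R(M,\Gamma_{I,J}(L))$; since $\Gamma_{I,J}(L)\subseteq L$ is artinian and $M$ admits a surjection $R^n \twoheadrightarrow M$, this $\Hom$ embeds into the artinian module $\Gamma_{I,J}(L)^n$, which settles the case.

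For the inductive step I would pass through the injective hull $E(L)$, whose crucial property is that it is itself artinian. To see this, observe that $\Ass(L)$ is a finite set of maximal ideals, and each Bass number $\mu^0(\mathfrak{m},L) = \dim_{R/\mathfrak{m}}\Hom_R(R/\mathfrak{m},L)$ is finite, being the dimension of an artinian $R/\mathfrak{m}$-vector space. Thus $E(L) = \bigoplus_{\mathfrak{m}\in\Ass(L)} E(R/\mathfrak{m})^{\mu^0(\mathfrak{m},L)}$ is a finite direct sum, and each summand $E(R/\mathfrak{m})$ is artinian over $R$ by Matlis theory: as an $\mathfrak{m}$-torsion module its $R$-submodule lattice coincides with its $\widehat{R_\mathfrak{m}}$-submodule lattice, and over the complete local ring $\widehat{R_\mathfrak{m}}$ the Matlis module is artinian. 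Consequently both $E(L)$ and $E(L)/L$ are artinian. Applying $H^\bullet_{I,J}(M,-)$ to $0 \to L \to E(L) \to E(L)/L \to 0$, and using that $H^j_{I,J}(M,E(L))=0$ for $j\geq 1$ (as $E(L)$ is injective), the long exact sequence yields $H^i_{I,J}(M,L) \cong H^{i-1}_{I,J}(M,E(L)/L)$ for $i\geq 2$, while $H^1_{I,J}(M,L)$ appears as a quotient of $\Hom_R(M,\Gamma_{I,J}(E(L)/L))$. The inductive hypothesis applied to the artinian module $E(L)/L$ closes the step.

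The main obstacle is the claim that $E(R/\mathfrak{m})$ is artinian over $R$; while classical, it relies on Matlis duality over $\widehat{R_\mathfrak{m}}$ together with the careful observation that the submodule lattice of an $\mathfrak{m}$-torsion module is insensitive to passage from $R$ to $R_\mathfrak{m}$ and then to $\widehat{R_\mathfrak{m}}$. An alternative, more direct route avoiding the inductive machinery is to take a minimal injective resolution $E^\bullet$ of $N$, note that each $E^i = \bigoplus_{\mathfrak{m}\in\Ass(N)} E(R/\mathfrak{m})^{\mu^i(\mathfrak{m},N)}$ is a finite direct sum with finite Bass numbers (hence artinian), apply $\Hom_R(M,\Gamma_{I,J}(-))$ termwise to obtain a complex of artinian modules, and read off that $H^i_{I,J}(M,N)$ is artinian as a subquotient. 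Either route reduces the theorem to the structural fact about $E(R/\mathfrak{m})$.
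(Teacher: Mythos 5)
Your proof is correct and follows exactly the paper's inductive strategy: reduce via the short exact sequence $0 \to N \to E(N) \to E(N)/N \to 0$, use $H^i_{I,J}(M,E(N))=0$ for $i\geq 1$, and invoke the inductive hypothesis on the artinian module $E(N)/N$. The one place you add substance is in justifying that $E(N)$ is artinian: the paper simply asserts that an artinian module and any essential extension of it are simultaneously artinian, whereas you unwind this to the finiteness of $\Ass(N)$ and of the Bass numbers $\mu^0(\mathfrak{m},N)$ together with the Matlis-theoretic artinianness of $E(R/\mathfrak{m})$ — a genuine gap-filling, but the overall route is the same.
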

\begin{proof}
We use induction on $i$. When $i=0,$ we have
$\Gamma_{I,J}(M,N)=\Hom_R(M,\Gamma_{I,J}(N))$ is artinian, as
$\Gamma_{I,J}(N)\subset N.$

Let $i>0,$ denote by $E(N)$  the injective hull of $N$. Note that,
if $N \subset K$ is an essential submodule, then $N$ is artinian if
and only if $K$ is artinian. Hence $E(N)$ is also artinian.

Now the short exact sequence
$$0\to N\to E(N)\to E(N)/N\to 0$$
deduces a long exact sequence
$$\cdots\to H^{i-1}_{I,J}(M,E(N)/N)\to H^i_{I,J}(M,N)\to H^i_{I,J}(M,E(N))\to\cdots$$
Since $H^i_{I,J}(M,E(N))=0$ for all $i>0$, there are isomorphims
$$H^{i-1}_{I,J}(M,E(N)/N)\cong H^i_{I,J}(M,N)$$
for all $i>1.$

$H^{i-1}_{I,J}(M,E(N)/N)$ is artinian by inductive hypothesis.
Therefore $H^i_{I,J}(M,N)$ is also artinian.
\end{proof}
\bigskip

\end{document}